\theoremstyle{definition}
\newtheorem{definition}{Definition}[section]
\theoremstyle{plain}
\newtheorem{teo}[definition]{Theorem}
\newtheorem{prop}[definition]{Proposition}
\newtheorem{cor}[definition]{Corollary}
\newtheorem{prob}[definition]{Problem}
\newtheorem{rem}[definition]{Remark}
\title{Graphs with large palette index }
\author{D.Mattiolo\thanks{Partially supported by Fondazione Cariverona, program ``Ricerca Scientifica di Eccellenza 2018'', project ``Reducing complexity in algebra, logic, combinatorics - REDCOM''}, G.Mazzuoccolo \thanks{Dipartimento di Informatica,
Universit\`a di Verona, Strada Le Grazie 15, Verona, Italy.}, G.Tabarelli \thanks{Dipartimento di Matematica,
Universit\`a di Trento, Via Sommarive 14, Povo (TN), Italy.}
}
\begin{document}
	\maketitle

	\begin{abstract}
	 Given an edge-coloring of a graph, the palette of a vertex is defined as the set of colors of the edges which are incident with it. We define the palette index of a graph as the minimum number of distinct palettes, taken over all edge-colorings, occurring among the vertices of the graph.
	 Several results about the palette index of some specific classes of graphs are known. In this paper we propose a different approach that leads to new and more general results on the palette index. Our main theorem gives a sufficient condition for a graph to have palette index larger than its minimum degree. In the second part of the paper, by using such a result, we answer to two open problems on this topic. 
	 First, for every $r$ odd, we construct a family of $r$-regular graphs with palette index reaching the maximum admissible value. After that, we construct the first known family of simple graphs whose palette index grows quadratically with respect to their maximum degree.
	\end{abstract}

	\section{Introduction and definitions}
	
	In this paper we study the palette index of graphs, which is a parameter related to colorings of the edges. In this context a $k$-edge-coloring of a graph $G$ is a map $c\colon E(G)\to\{1,\dots, k\}$ such that, for every pair of incident edges $e_1,e_2\in E(G)$, $c(e_1)\ne c(e_2)$. We define the \emph{palette} of a vertex $v\in V(G)$, with respect to the edge-coloring $c$ of $G$, to be the set $P_c(v) = \{c(e)\colon e\in E(G) $ and $e$ is incident to $v\}$. 
	The \emph{palette index} $\check s(G)$ of a graph $G$ is the minimum number of distinct palettes, taken over all edge-colorings, occurring among the vertices of the graph. 
	This parameter was formally introduced in \cite{meszka} and several results have appeared since then, see \cite{BonBonMaz, BonMaz, CasPet, Gha, HorHud, Smb}. All mentioned papers mainly consider the computation of the palette index in some special classes of graphs, such as trees, complete graphs, complete bipartite graphs, $3-$ and $4-$regular graphs and some others.
	Furthermore, it turns out that the palette index can be used to model some problems related to the self-assembly of DNA structure, see \cite{BonFer}. 
	
	It is an easy consequence of the definition that a graph has palette index equal to $1$ if and only if it is a $k$-regular graph admitting a $k$-edge-coloring. Moreover, it is proved in \cite{meszka} that no regular graph has palette index equal to $2$.
	The situation is less understood when we ask for $r$-regular graphs with a large palette index.
	The case of cubic graphs (i.e.\ $r=3$) is completely solved by the following theorem.

	\begin{teo}[\cite{meszka}]\label{cubic}
		Let $G$ be a connected cubic graph. \begin{itemize}
			\item $G$ is $3$-edge-colorable if and only if $\check s(G)=1$;
			\item $G$ is not $3$-edge-colorable with a $1$-factor if and only if $\check s(G)=3$;
			\item $G$ is not $3$-edge-colorable without a $1$-factor if and only if $\check s(G)=4$.
		\end{itemize}
	\end{teo}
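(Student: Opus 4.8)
The plan is to break the three equivalences into five ingredients: two elementary observations, two short constructions, and one structural claim which is the real content; that last claim will be the main obstacle.

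First I would dispose of the easy parts. Since $G$ is cubic, in any proper $3$-edge-coloring every vertex meets three edges carrying three distinct colors, so all palettes equal $\{1,2,3\}$ and $\check s(G)=1$; conversely, if a coloring produces a single palette $P$ then $|P|=3$, only three colors are used, and the coloring is a proper $3$-edge-coloring. This settles the first bullet. Moreover, by Vizing's theorem $G$ has a proper $4$-edge-coloring, and with four colors every vertex palette is one of the $\binom{4}{3}=4$ three-element subsets of $\{1,2,3,4\}$, so $\check s(G)\le 4$. Together with the result of \cite{meszka} that no regular graph has palette index $2$, this shows that $\check s(G)\in\{1,3,4\}$ for every cubic $G$, with $\check s(G)=1$ holding exactly when $G$ is $3$-edge-colorable.

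Next I would prove the two implications that concern a $1$-factor. For the upper bound: if $G$ has a $1$-factor $M$, color $M$ with color $4$ and properly color the disjoint union of cycles $G-M$ using colors $1,2$ on each even cycle and $1,2,3$ on each odd cycle; every vertex then has palette $\{1,2,4\}$, $\{1,3,4\}$ or $\{2,3,4\}$, so $\check s(G)\le 3$. Hence a cubic $G$ with a $1$-factor that is not $3$-edge-colorable satisfies $\check s(G)=3$ (using also $\check s(G)\notin\{1,2\}$), and in particular $\check s(G)=4$ forces $G$ to have no $1$-factor. The converse implication is the structural claim: \emph{if $G$ is connected, cubic and $\check s(G)=3$, then $G$ has a $1$-factor.} Fix a coloring realizing palettes $P_1,P_2,P_3$ and let $V_i$ be the set of vertices with palette $P_i$. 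The key remark is that, for each color $\alpha$, the $\alpha$-edges form a perfect matching of $U_\alpha:=\bigcup\{V_i : \alpha\in P_i\}$, since both endpoints of such an edge have $\alpha$ in their palette. If some color lies in $P_1\cap P_2\cap P_3$, its color class is already a $1$-factor of $G$. Otherwise, for each way of writing $\{i,j,k\}=\{1,2,3\}$ I would try to combine the color class of a color exclusive to $P_i$ (which covers exactly $V_i$) with the color class of a color in $P_j\cap P_k$ (which covers exactly $V_j\cup V_k$) into a $1$-factor of $G$; connectivity guarantees that some intersection $P_j\cap P_k$ is nonempty, so at least one of these attempts has its shared-color half available.

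The hard part is then to show that these three attempts cannot all fail. This should follow from a finite case analysis on how the colors are distributed among $P_1,P_2,P_3$, using $|P_i|=3$, $P_i\ne P_j$ and $|P_i\cap P_j|\le 2$ to reach a contradiction (for instance, the failure of all three attempts together with the absence of a color exclusive to $P_3$ should force $P_2$ simultaneously to contain and not to contain a color exclusive to it). Once the structural claim is established, the remaining equivalences are purely formal: $\check s(G)=3$ holds iff $G$ is not $3$-edge-colorable but has a $1$-factor, by combining the upper bound, the structural claim and $\check s(G)\notin\{1,2\}$; and $\check s(G)=4$ holds iff $G$ is not $3$-edge-colorable and has no $1$-factor, because such a $G$ satisfies $\check s(G)\le 4$, $\check s(G)\notin\{1,2\}$, and $\check s(G)\ne 3$ by the contrapositive of the structural claim.
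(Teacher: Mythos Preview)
The paper does not prove this theorem: it is quoted from \cite{meszka} as background, so there is no in-paper argument to compare against. Judged on its own, your outline is sound through the routine parts --- the first bullet, the Vizing bound $\check s(G)\le 4$, the exclusion of $\check s(G)=2$, and the construction giving $\check s(G)\le 3$ from a $1$-factor --- and you correctly isolate the only substantive implication, that $\check s(G)=3$ forces a $1$-factor. Your plan of assembling a perfect matching from a color exclusive to one palette together with a color shared by the other two is exactly right.

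The gap is at the point you yourself flag as ``the hard part'': you do not actually carry out the case analysis, and the parenthetical hint about $P_2$ is too vague to stand as an argument. It can be completed in a few lines. Assuming no color lies in all three palettes, if every color lay in exactly two then the equations $|P_\ell|=3$ would force each pairwise intersection to have size $3/2$, impossible; hence some $P_i$ has an exclusive color. If $P_j\cap P_k\ne\emptyset$ you are done. Otherwise $P_j\cap P_k=\emptyset$, so any color of $P_j\setminus P_i$ (nonempty since $P_j\ne P_i$ and $|P_j|=|P_i|$) is exclusive to $P_j$, while connectivity forces $P_k\cap P_i\ne\emptyset$ (an edge out of $V_k$ cannot land in $V_j$), so the combination for index $j$ succeeds instead. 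It is also worth noting that the paper's own Theorem~\ref{main_theorem} yields this implication in one stroke: for a cubic graph a spanning even subgraph without isolated vertices is precisely a $2$-factor, so the absence of a $1$-factor (equivalently, of a $2$-factor) gives $\check s(G)>\delta(G)=3$.
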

	
	It is a trivial observation that $\check s(G) \leqslant r+1$ for an $r$-regular graph $G$: indeed, every $(r+1)$-edge-coloring (such a coloring does exist by a classical theorem of Vizing \cite{Viz}) has at most $r+1$ distinct palettes.
    Theorem \ref{cubic} shows an infinite family of cubic graphs with palette index $4$, and an example of a $4$-regular graph with palette index $5$ is constructed in \cite{BonMaz}.
    
    The general question about the existence of a $r$-regular graph $G$ with the maximum possible palette index, i.e.\ $\check s(G)=r+1$, is open for every $r>4$. In the present paper we give a complete answer to this question for every $r$ odd. 
    The main contribute of this paper is Theorem \ref{main_theorem}, a general result which gives a sufficient condition for an arbitrary graph $G$ to have $\check s(G)$ larger than its minimum degree. 
    In the special case that $G$ is an odd regular graph, Theorem \ref{main_theorem} furnishes a very strong restriction for the value of the palette index of $G$, see Corollary \ref{cor:regular}. 
    In Section \ref{sec:regular} we make use of such result to construct, for every positive integer $k$, a family of $(2k+1)$-regular graphs with $\check s(G)=2k+2$. 
	
	Finally, in Section \ref{sec:quadratic}, we move our attention to the non-regular case. As far as we know, no family of graphs with palette index which grows faster than $\Delta \log(\Delta)$  is known, where $\Delta$ denotes the maximum degree of $G$.
	In \cite{Aves_Bonis_Mazz}, a family of multigraphs whose palette index is expressed by a quadratic polynomial in $\Delta$ is presented. Problem 5.1 in \cite{Aves_Bonis_Mazz} asks for the construction of a family of graphs without multiple edges with the same property.  
	Here we give a complete answer to such a problem. Indeed, as a by-product of our result for odd regular graphs, we show a family of graphs having palette index which grows quadratically in $\Delta$.
	
\section{Graphs with palette index larger than their minimum degree}

In this section we prove a sufficient condition for a graph to have palette index larger than its minimum degree. Before going to the main result we give some preliminary definitions. 
Let $G$ be a graph, we denote by $\Delta(G)$ and $\delta(G)$ the maximum and minimum degree of $G$, respectively. Moreover, for every vertex $v$ of $G$ we denote by $d_G(v)$ the degree of $v$ in $G$.

A subgraph $K$ of $G$ is an \emph{even subgraph} of $G$ if every vertex has even degree in $K$. A \emph{spanning even subgraph} of $G$ is an even subgraph $K$ of G such that $V(K)=V(G)$.

Let $c$ be an edge-coloring of a graph $G$ with colors in the set $\cal C$ and generating palettes $P_1,\dots, P_t$. 
We consider a map $\phi_c\colon{\cal P}({\cal C}) \to (\mathbb{Z}_2^t,+) $, associated to $c$, where ${\cal P}({\cal C})$ denotes the power set of $\cal C$ and $(\mathbb{Z}_2^t,+)$ denotes the elementary abelian group of order $2^t$ whose elements are all strings of length $t$ with values $0$ and $1$\textcolor{blue}{,} and $+$ denotes the addition modulo $2$. For every subset $A$ of $\cal C$ we define $\phi_c(A)=(p_1,\dots,p_t)$, where we set

\[ 
p_i= \begin{cases} 0 & \text{ if $|P_i \cap A|$ even,} \\
        1 & \text{ if $|P_i \cap A|$ odd,} \\
       \end{cases}
\] 
for every $i\in\{1,\dots,t\}$.

In other words, the map $\phi_c$ establishes the parity of the cardinality of the intersection between $A$ and every palette of $c$.

For every subset $A$ of $\cal C$ we consider the subgraph $G_A$ induced by all edges with a color in $A$ in the edge-coloring $c$.  
The following remark is straightforward.

\begin{rem}\label{phi_parity}
Let $c$ be a proper edge-coloring of $G$ and let $A$ be a subset of the set of colors. The subgraph $G_A$ of $G$ is an even subgraph of $G$ if and only if $\phi_c(A)=(0,\dots,0)$.
\end{rem}

We are now in position to prove our main result.

\begin{teo}\label{main_theorem}
Let $G$ be a graph such that $\Delta(G) \geqslant 2$ and $G$ has no spanning even subgraph without isolated vertices. Then, $\check s(G) > \delta(G) $.
\end{teo}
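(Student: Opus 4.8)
The plan is to argue by contradiction: suppose $G$ has an edge-coloring $c$ with $t \leqslant \delta(G)$ distinct palettes $P_1,\dots,P_t$, and use the map $\phi_c$ to produce a spanning even subgraph of $G$ without isolated vertices, contradicting the hypothesis. The key observation is that $\phi_c$ is a group homomorphism from $(\mathcal{P}(\mathcal{C}),\triangle)$ (the power set with symmetric difference) to $\mathbb{Z}_2^t$: indeed $|P_i \cap (A \triangle B)| \equiv |P_i \cap A| + |P_i \cap B| \pmod 2$, so $\phi_c(A \triangle B) = \phi_c(A) + \phi_c(B)$. In particular $\phi_c$ restricted to singletons $\{1\},\dots,\{k\}$ (where $\mathcal{C} = \{1,\dots,k\}$) extends linearly, and by Remark~\ref{phi_parity} the kernel of $\phi_c$ consists precisely of the color sets $A$ for which $G_A$ is an even subgraph.

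**Next I would do the dimension count.** The vectors $\phi_c(\{1\}), \dots, \phi_c(\{k\})$ live in $\mathbb{Z}_2^t$, a space of dimension $t \leqslant \delta(G)$. For each vertex $v$, its palette $P_c(v)$ has size $d_G(v) \geqslant \delta(G) \geqslant t$ (since $c$ is proper, the edges at $v$ get distinct colors). So for every vertex $v$, the multiset of vectors $\{\phi_c(\{j\}) : j \in P_c(v)\}$ has at least $t$ elements in a space of dimension $\leqslant t$; actually I want a slightly more careful statement. Consider, for a fixed palette class $P_i$, the sub-collection of colors appearing in $P_i$: the vectors $\{\phi_c(\{j\}) : j \in P_i\}$ number $|P_i| \geqslant t$ and sit in $\mathbb{Z}_2^t$. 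I want to find, for each $i$, a nonempty subset $S_i \subseteq P_i$ with $\sum_{j \in S_i} \phi_c(\{j\}) = 0$ and, crucially, with $|S_i|$ even is not quite what's needed — rather I want a single color set $A$ with $\phi_c(A) = 0$ (so $G_A$ even) such that $A$ meets every palette in an \emph{even and positive} number of colors, equivalently every vertex has positive even degree in $G_A$, i.e. $G_A$ is spanning without isolated vertices.

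**The heart of the argument** is therefore: find $A \subseteq \mathcal{C}$ with $|P_i \cap A| \geqslant 2$ and $|P_i \cap A|$ even for every $i$. Reformulate via linear algebra over $\mathbb{Z}_2$: for each color $j$ let $x_j \in \mathbb{Z}_2$ be the indicator of $j \in A$; the condition $\phi_c(A)=0$ is the homogeneous linear system $\sum_{j} [\,j \in P_i\,] x_j = 0$ for $i = 1,\dots,t$ — that is $t$ equations. This system always has the solution space of dimension $\geqslant k - t$. The subtlety is upgrading "$|P_i \cap A|$ even" to "$|P_i \cap A|$ even and $\geqslant 2$". Here I would use that each palette has $|P_i| \geqslant t$: one natural move is to work palette-by-palette building up $A$ greedily, or to note that within the at-least-$t$ colors of any palette $P_i$, any $t+1$ of the vectors $\phi_c(\{j\})$ are linearly dependent, yielding a nonempty even-or-odd relation; then combine relations across palettes. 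To force \emph{evenness} one can pass to the subgroup of $\mathbb{Z}_2 \times \mathbb{Z}_2^t$ recording the parity of $|A|$ as well, or equivalently observe that if $A$ works then so does any $A' $ in its coset whose intersections are shifted — the cleanest route is probably: since $|P_i| \geqslant t$ for all $i$, a counting/pigeonhole on the $2^t$ possible value-tuples shows a nonzero solution exists that is moreover supported so that each $P_i$ is hit evenly and nontrivially. I expect this combinatorial-linear-algebra step — guaranteeing the intersection with every palette is simultaneously \emph{even} and \emph{nonzero} — to be the main obstacle; the homomorphism property and Remark~\ref{phi_parity} then immediately convert such an $A$ into the forbidden spanning even subgraph without isolated vertices, completing the contradiction and hence the proof.
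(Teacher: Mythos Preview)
Your framework matches the paper's: argue by contradiction assuming $t \leqslant \delta(G)$ palettes, exploit that $\phi_c \colon (\mathcal{P}(\mathcal{C}),\triangle) \to \mathbb{Z}_2^t$ is a homomorphism, and seek $A$ with $\phi_c(A)=0$ meeting every palette nontrivially, so that $G_A$ is the forbidden spanning even subgraph. But the step you yourself flag as ``the main obstacle'' is genuinely unresolved. Knowing that $\ker\phi_c$ has dimension at least $|\mathcal{C}|-t$, or that any $t+1$ singleton vectors are dependent, does not by itself produce a kernel element meeting \emph{every} palette; your appeal to pigeonhole ``so that each $P_i$ is hit evenly and nontrivially'' is precisely the point that needs an argument and you give none. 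You also omit a necessary preliminary: one must first show $|\mathcal{C}|>\delta(G)$ (trivial if $G$ is irregular; if $G$ is $r$-regular the hypothesis forces $r$ odd and $G$ without a perfect matching, hence no $r$-edge-coloring).

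The paper supplies the missing idea via an extremal choice rather than a direct construction: take $A$ to be a \emph{largest} color set with $\phi_c(A)=0$. If some vertex with palette $P_j$ is isolated in $G_A$, then $P_j\cap A=\emptyset$; form a set $R_j$ of size $\delta(G)+1$ by taking $\delta(G)$ colors from $P_j$ together with one further color $\alpha$ (taken from $\mathcal{C}\setminus P_j$ when $|P_j|=\delta(G)$, which is possible exactly because $|\mathcal{C}|>\delta(G)$). Since $2^{\delta(G)+1}-1>2^t$, two nonempty subsets of $R_j$ collide under $\phi_c$; their symmetric difference $I$ lies in $\ker\phi_c$, and a short case check (on whether $\alpha$ can lie in $A$) shows $|A\triangle I|>|A|$ while $\phi_c(A\triangle I)=0$, contradicting the maximality of $A$. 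This maximality device is the key step your proposal lacks.
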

\begin{proof}

First of all, we observe that the number of colors in any edge-coloring of $G$ is larger than $\delta(G)$. If $G$ is not regular, this follows immediately since the number of colors is at least $\Delta(G)>\delta(G)$. If $G$ is a $r$-regular graph with $r>1$, then $r$ is odd, otherwise $G$ itself is a spanning even subgraph without isolated vertices.
In this case $G$ has not a perfect matching $M$, otherwise the complement of $M$ would be a spanning even subgraph of minimum degree $r-1$, a contradiction.
Then, $G$ cannot admit an $r$-edge-coloring.

By contradiction we assume that $G$ admits an edge-coloring $c$ with colors in $\cal C$ which induces $t$ palettes and $t\leqslant \delta(G)$. As already proved, we have $|{\cal C}|> \delta(G)$.
Now we prove the existence of a subset of $\cal C$ which induces a spanning even subgraph $K$ of $G$ with $\delta(K)\geqslant 2$, thus obtaining a contradiction. 

Let $A$ be a largest subset of $\cal C$ such that $G_A$ is an even subgraph of $G$. Note that such a subset does exist since the empty-set induces an even subgraph of $G$.

We only need to prove that $\delta(G_A)\geqslant 2$. Assume that this is not the case, then $G_A$ has an isolated vertex $v$. Let $P_j$ be the palette of $v$ in the edge-coloring $c$. Since $d_{G_A}(v)=0$, $P_j \cap A=\emptyset$ holds.

Clearly $|P_j|\geqslant \delta(G)$ holds and we proved that $|{\cal C}|> \delta(G)$. Hence, we can construct a subset $R_j$ of $\cal C$ by choosing $\delta(G)$ arbitrary elements from $P_j$ and an additional color, denoted by $\alpha$, in the following way:
\begin{enumerate}
 \item $\alpha \in P_j$, if $|P_j|>\delta(G)$; 
 \item $\alpha \notin P_j$, if $|P_j|=\delta(G)<|\cal{C}|.$
\end{enumerate}

Note that $\alpha$ could belong to $A$ only in the second case. 

The number of non empty subsets of $R_j$ is $2^{\delta(G)+1} - 1$, while the  possible images of these subsets under the action of $\phi_c$ are at most $2^{t}$, that is the order of the elementary abelian group $\mathbb{Z}_2^t$. Since we assumed $t\leq \delta(G)$, there exist two distinct subsets $I_1$ and $I_2$ of $R_j$ such that $\phi_c(I_1)=\phi_c(I_2)$.

Consider the symmetric difference $I= I_1 \triangle I_2$. The following holds: 
$$\phi_c(I)=\phi_c(I_1 \triangle I_2)= \phi_c(I_1) + \phi_c(I_2)= (0,\dots,0), $$
where $+$ denote the addition in the group $\mathbb{Z}_2^t$.

Finally we obtain a contradiction by proving that the set $A \triangle I$ is a set of colors larger than $A$ which induces an even subgraph of $G$.

If $\alpha \in R_j$, then $A$ and $I$ are disjoint sets and $I$ is not empty, then $A \triangle I = A \cup I$ is larger than $A$.

If $\alpha \notin R_j$, then $I$ contains at least two elements different from $\alpha$. Indeed, it contains at least one of element of $P_j$ and, more precisely, it must contain an even number of them by $\phi_c(I)=(0,\dots,0)$.
Moreover, $A \cap I \subseteq \{\alpha\}$ holds since $\alpha$ is the unique possible element of $A$ which could belong to $I$. Then, $|A \triangle I|=|A|+|I|-2|A \cap I| > |A|$ where the last inequality holds since $|I|>2$ whenever $A \cap I = \{ \alpha \} $. Then, $A \triangle I$ is larger than $A$ again. 
Finally,
$$\phi_c(A \triangle I)=\phi_c(A)+\phi_c(I) = (0,\dots,0),$$ 
since $\phi_c(A)=(0,\dots,0)$ by assumption and $\phi_c(I)=(0,\dots,0)$ as already proved. Then, $A \triangle I$ induces an even subgraph of $G$ and it is larger than $A$, a contradiction again. 
It follows that $G_A$ is a spanning even subgraph of $G$ without isolated vertices and the assertion is proved.
\end{proof}

\section{Regular graphs with palette index as large as possible}
\label{sec:regular}
In this section we construct families of $r$-regular graphs having palette index equal to $r+1$. 

First of all, we deduce the following easy consequence of Theorem \ref{main_theorem} in the case of odd regular graphs.

\begin{cor}\label{cor:regular}
For every positive integer $k$, let $G$ be a $(2k+1)$-regular graph with no spanning even subgraph without isolated vertices. Then, $\check s(G)=2k+2$. 
\end{cor}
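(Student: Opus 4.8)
The plan is to obtain the equality $\check s(G)=2k+2$ by pinning it between the trivial upper bound for regular graphs and the lower bound provided by Theorem \ref{main_theorem}. For the upper bound I would simply recall the observation already made in the introduction: an $r$-regular graph satisfies $\check s(G)\leqslant r+1$, because Vizing's theorem furnishes an $(r+1)$-edge-coloring and any such coloring uses only $r+1$ colors, hence exhibits at most $r+1$ distinct palettes. Applying this with $r=2k+1$ gives $\check s(G)\leqslant 2k+2$.

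For the lower bound I would verify that the hypotheses of Theorem \ref{main_theorem} are met. Since $k$ is a positive integer, $\Delta(G)=2k+1\geqslant 3\geqslant 2$, and by hypothesis $G$ has no spanning even subgraph without isolated vertices. Theorem \ref{main_theorem} then yields $\check s(G)>\delta(G)$. Because $G$ is $(2k+1)$-regular we have $\delta(G)=2k+1$, so $\check s(G)\geqslant 2k+2$.

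Combining the two inequalities gives $\check s(G)=2k+2$. I do not expect any genuine obstacle in this argument: the only things to check are the immediate facts that $\delta(G)=\Delta(G)=2k+1$ for a $(2k+1)$-regular graph and that $2k+1\geqslant 2$, after which the statement is a direct corollary of Theorem \ref{main_theorem} together with the standard upper bound. All of the real work has already been carried out in the proof of Theorem \ref{main_theorem}.
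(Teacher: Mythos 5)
Your proposal is correct and follows exactly the paper's own argument: the upper bound $\check s(G)\leqslant 2k+2$ from the Vizing-based observation for regular graphs, combined with the lower bound $\check s(G)>\delta(G)=2k+1$ from Theorem \ref{main_theorem}. No issues.
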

\begin{proof}
As already observed, the palette index of a $(2k+1)$-regular graph cannot be larger than $2k+2$. Then, it suffices to prove that $\check s(G)>2k+1$.
The relation $\delta(G)=\Delta(G)=2k+1 > 1$ holds and $G$ has not any spanning even subgraph without isolated vertices. Then $G$ has palette index larger than $2k+1$ by Theorem \ref{main_theorem}. 
\end{proof}

Corollary \ref{cor:regular} gives a sufficient condition for an odd regular graph to have maximum possible palette index. Theorem \ref{cubic} says that this sufficient condition is also necessary in the cubic case (i.e.\ $k=1$). Indeed, when $k=1$, the existence of a spanning even subgraph without isolated vertices is equivalent to the existence of a $2$-factor.

We wonder if the same holds for $k>1$ and we leave it as an open problem.

\begin{prob}
Prove (or disprove) that if $G$ is a $(2k+1)$-regular graph such that $\check s(G)=2k+2$, then $G$ has not a spanning even subgraph without isolated vertices.  
\end{prob}

We complete this section by showing, for every integer $k$, a family of $(2k+1)$-regular graphs which satisfy the condition of Corollary \ref{cor:regular}, and thus having palette index $2k+2$.

It suffices to consider a $(2k+1)$-regular graph $G$ with a vertex $v$ such that every edge incident to $v$ is a bridge of $G$ (see Figure \ref{fig:example}). 
Clearly, $G$ cannot have a spanning even subgraph $K$ with $\delta(K)$ at least $2$, since $K$ should contain at least two of the $2k+1$ bridges incident to $v$, but an even subgraph cannot contain any bridge of $G$.

\begin{figure}[h]
\centering
\includegraphics[width=5cm]{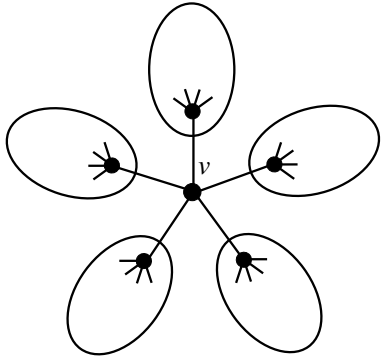} 
\caption{The structure of a $5$-regular graph with palette index $6$.}\label{fig:example}
\end{figure}

Then, we can state the following proposition which gives an answer, for odd regular graphs, to an open problem in \cite{BonMaz}, where Bonvicini and the second author wondered about the existence of a $r$-regular graph with palette index $r+1$ for every $r>4$.  

\begin{prop}\label{max_palette}
For every positive integer $k$, let $G$ be a $(2k+1)$-regular graph admitting a vertex $v$ such that every edge incident to $v$ is a bridge of $G$. Then, $\check s(G)=2k+2$.
\end{prop}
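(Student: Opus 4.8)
The plan is to show that a graph $G$ as in the statement satisfies the hypotheses of Corollary \ref{cor:regular}; the value $\check s(G)=2k+2$ then follows immediately. Since $G$ is $(2k+1)$-regular with $k\geqslant 1$, we have $\Delta(G)=\delta(G)=2k+1>1$, so the only thing left to establish is that $G$ has no spanning even subgraph without isolated vertices.

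The key preliminary observation I would record is the classical fact that no even subgraph of a graph can contain a bridge. There are two quick routes: either invoke the decomposition of an even graph into edge-disjoint cycles, so that every edge of an even subgraph of $G$ lies on a cycle of $G$, whereas a bridge lies on none; or argue by a direct parity count — if $e=xy$ is a bridge of $G$ and $B$ is the vertex set of the component of $G-e$ containing $x$, then for an even subgraph $K$ the sum $\sum_{u\in B} d_K(u)$ equals twice the number of edges of $K$ with both ends in $B$ plus $1$ if $e\in K$, which is odd when $e\in K$, contradicting that all degrees in $K$ are even.

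With this in hand, suppose for contradiction that $K$ is a spanning even subgraph of $G$ with no isolated vertex. Then every vertex of $K$, in particular $v$, has even and nonzero degree, so $d_K(v)\geqslant 2$; hence $K$ uses at least two of the edges incident with $v$. By hypothesis each of these edges is a bridge of $G$, which contradicts the observation above. Therefore no such $K$ exists, $G$ meets the hypothesis of Corollary \ref{cor:regular}, and consequently $\check s(G)=2k+2$. The only genuinely non-formal ingredient is the bridge-freeness of even subgraphs, which is standard, so I do not expect any real obstacle here; the rest is routine bookkeeping already sketched in the paragraph preceding the statement.
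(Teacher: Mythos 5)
Your proposal is correct and follows essentially the same route as the paper: verify the hypothesis of Corollary \ref{cor:regular} by noting that a spanning even subgraph without isolated vertices would force $d_K(v)\geqslant 2$ and hence contain at least two of the bridges at $v$, while an even subgraph can contain no bridge. The only difference is that you spell out the standard fact that even subgraphs are bridge-free, which the paper states without proof.
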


The problem of establishing the existence of a $2k$-regular graph with palette index $2k+1$, for $k>2$, remains open.

\section{Graphs with palette index growing quadratically with respect to their maximum degree}\label{sec:quadratic}

In \cite{Aves_Bonis_Mazz}, the authors present a family of multigraphs whose palette index is expressed by a quadratic polynomial in $\Delta$. They leave the construction of a family of graphs without multiple edges with the same property as an open problem.
We use our main result in previous section to obtain such a family of graphs in a very straightforward way.

For every positive integer $i$, let $G_i$ be a $(2i+1)$-regular graph with $\check s(G_i)=2i+2$ (see Corollary \ref{max_palette}). Moreover, for every positive integer $k$, let $H_k$ be the disjoint union of $G_1, G_2, \dots, G_k$.

Clearly, $\Delta(H_k)=\Delta(G_k)=2k+1$ holds. Moreover, every connected component of $H_k$ has vertices of degree different from the degree of the vertices in every other component. Hence, it follows

$$\check s(H_k)= \sum_{i=1}^{k} \check s(G_i) = \sum_{i=1}^{k} (2i+2) = k^2 + 3k = \frac{\Delta^2+4\Delta-5}{4}.$$

It is not hard to construct examples of graphs with the same property and also connected.
Starting from $H_k$, we add a new extra vertex which is declared to be adjacent to exactly one vertex in each connected component of $H_k$; the choice of the vertex in each component is unrelevant. The graph so obtained is connected and it has maximum degree one more than $H_k$. Moreover, it is an easy check that its palette index is larger than $\check s(H_k)$. Then, we have an infinite family of graphs without multiple edges whose palette index grows quadratically with respect to their maximum degrees.

\end{document}